\newcommand{\la}{\lambda}
\def\Hom{\mathrm{Hom}}
\def\Aut{\mathrm{Aut}}
\def\U{\mathsf{U}}
\def\C{\ensuremath{\mathbbm{C}}}
\def\Q{\mathbbm{Q}}
\def\Qb{\overline{\Q}}
\def\Z{\mathbbm{Z}}
\def\bar{\overline}
\def\k{\mathbbm{k}}
\def\kt{\mathbbm{k}^{\times}}
\def\Ass{\mathbbm{Assoc}}
\def\eps{\epsilon}
\def\GT{\mathrm{GT}}
\def\GTH{\widehat{\mathrm{GT}}}
\def\onto{\twoheadrightarrow}
\def\into{\hookrightarrow}
\def\ii{\mathrm{i}}
\def\Gal{\mathrm{Gal}}
\def\GL{\mathrm{GL}}
\def\PGL{\mathrm{PGL}}
\def\SL{\mathrm{SL}}
\def\ZH{\widehat{\Z}}
\def \limpro#1{\lim\limits_{\displaystyle\longleftarrow\atop {#1}}}
\newtheorem{theo}{Theorem}[section]
\newtheorem{prop}[theo]{Proposition}
\newtheorem{defi}[theo]{Definition}
\newtheorem{lemma}[theo]{Lemma}
\def\mod{\ \mathrm{mod}\ }
\DeclareRobustCommand\widecheck[1]{{\mathpalette\@widecheck{#1}}}
\def\@widecheck#1#2{%
   \setbox\z@\hbox{\m@th$#1#2$}%
   \setbox\tw@\hbox{\m@th$#1%
      \widehat{%
         \vrule\@width\z@\@height\ht\z@
         \vrule\@height\z@\@width\wd\z@}$}%
   \dp\tw@-\ht\z@
   \@tempdima\ht\z@ \advance\@tempdima2\ht\tw@ \divide\@tempdima\thr@@
   \setbox\tw@\hbox{%
      \raise\@tempdima\hbox{\scalebox{1}[-1]{\lower\@tempdima\box\tw@}}}%
   {\ooalign{\box\tw@ \cr \box\z@}}}
\title[Characters of the Grothendieck-Teichm\"uller group]{{\bf Characters of the Grothendieck-Teichm\"uller group
through rigidity of the Burau representation}}
\author{Ivan Marin}
\date{June 13th, 2007}
\begin{document}

\maketitle

\bigskip

\bigskip

\vspace{-1cm}
\begin{center} Institut de Math\'ematiques de Jussieu\\
Universit\'e Paris 7\\
175 rue du Chevaleret\\
 F-75013 Paris.
\end{center}
\medskip

\noindent {\bf Abstract.} We present examples of characters
of absolute Galois groups of number fields that can be recovered through
their action by automorphisms on the profinite completion of
the braid groups, using a ``rigidity'' approach.
The way we use to recover them is through classical representations
of the braid groups, and in particular through the Burau representation.
This enables one to extend these characters to Grothendieck-Teichm\"uller
groups.
\medskip

\noindent {\bf MSC 2000 :} 14G32.

\section{Introduction}

\subsection{Braid groups and free groups}

Let $n \geq 3$. We define the braid group $B_n$ as follows. Let $D_n$
be the disk with boundary with $n$ points removed (see figure \ref{figDn}).
Then $B_n$ is the group $\mathrm{Diff}^{+}(D_n)/\mathrm{Diff}^{+}_0(D_n)$
of diffeomorphisms which preserve orientation and fix the boundary pointwise
modulo isotopy. We note that the fundamental group
of $D_n$ is the free group $F_n$ on $n$ generators, hence we get from the
definition a natural action by automorphisms of $B_n$ on $F_n$.
This action is called the Artin action and is known to
be faithful, hence $B_n$ can be considered as a subgroup of $\Aut(F_n)$.

\subsection{Analogy : rigid local systems}

For motivation, let us say that an irreducible representation
$R : F_n \to \GL(V)$ is B-rigid if, for all $g \in B_n$,
we have $R \circ g \simeq R$ as representations of $F_n$. In
that case, we get a well-defined projective representation
$Q_R : B_n \to \PGL(V)$ of the braid group $B_n$. It turns
out that many B-rigid representations of the free group
are known. They are constructed by the way of so-called \emph{rigid local
systems}. We recall what this is, refering to \cite{KATZ} for
further reading.

Call a $(n+1)$-tuple $\underline{A} = (A_1,\dots
,A_{n+1}) \in \GL(V)^{n+1}$ a local system if $A_1 A_2 \dots A_{n+1} = 1$.
Another local system $\underline{B}$ is considered
equivalent to $\underline{A}$ if there exists $P \in \GL(V)$
such that $PA_i = B_i P$ for all $i \in [1,n+1]$.
Geometrically, this is a local system on $\mathbbm{P}^1(\C)$ minus
$n+1$ points, which is homotopically equivalent to $D_n$. We call it
irreducible if $V$ is irreducible under the action of
$A_1,\dots,A_{n+1}$. Note that this is the same as to ask it
to be irreducible under the action of $A_1,\dots,A_{n}$,
as $A_{n+1} = (A_1\dots A_{n})^{-1}$. We call such a local system
\emph{rigid} if, for any irreducible local system $\underline{B}$
such that $A_i$ and $B_i$ are conjugated in $\GL(V)$ for all
$i \in [1,n+1]$, then $\underline{B}$ is equivalent to $\underline{A}$.

An explicit description of the Artin action enables one to prove
the following : if $\underline{A}$ is a rigid (irreducible) local
system such that $A_i$ is conjugated to $A_j$ in $\GL(V)$
for all $i,j$, then the associated irreducible representation
$R : F_n \to \GL(V)$ which sends the $i$-th generator of $F_n$
to $A_i$ is B-rigid, hence defines a projective representation
of $B_n$ over $V$.

\subsection{Rigidity of Braid group representations}

In this paper, we show how one can apply the same approach to the
Grothendieck-Teichmüller group $\GTH$, being defined as a group
of automorphism of some completion of the braid group $B_n$.
In this way we get characters of natural subgroups of $\GTH$.

\medskip

One of the main interest of the group $\GTH$ is that it contains
the absolute Galois group $\Gal(\Qb|\Q)$.
Here we pay special attention to this aspect.
We show in particular how some of the classical characters
of absolute Galois groups of number fields, namely the Kummer character $\rho_2$
and the Soulé characters $\kappa_3$ and $\kappa_5$ can be
extended to the Grothendieck-Teichmüller group by using rigidity
of representations of the braid groups.

\medskip

In \S 2 and \S 3 we recall classical facts about completions of groups
and about the various groups involved here. Section 4 recovers the Kummer
character of roots of 2 by rigidity of a natural representation
$B_3 \to \SL_2(\Z)$, which can be seen as a degeneration of the Burau
representation of $B_3$. Section 5 defines
the notion of $\GT$-rigidity, and section 6 applies this mecanism
to the classical Burau representation.

\medskip

Part of the results exposed here have been proved in \cite{GT}.
This paper is roughly based on a talk given in Toulouse on January 26th,
2007, during the
``S\'eminaire M\'editerran\'een d'Alg\`ebre et de Topologie''
dedicated to the Grothendieck-Teichmüller group,
which was organised by C. Kapoudjian and I. Potemine.

\begin{figure}
\begin{center}
\mbox{\hspace{-3.5cm} \includegraphics{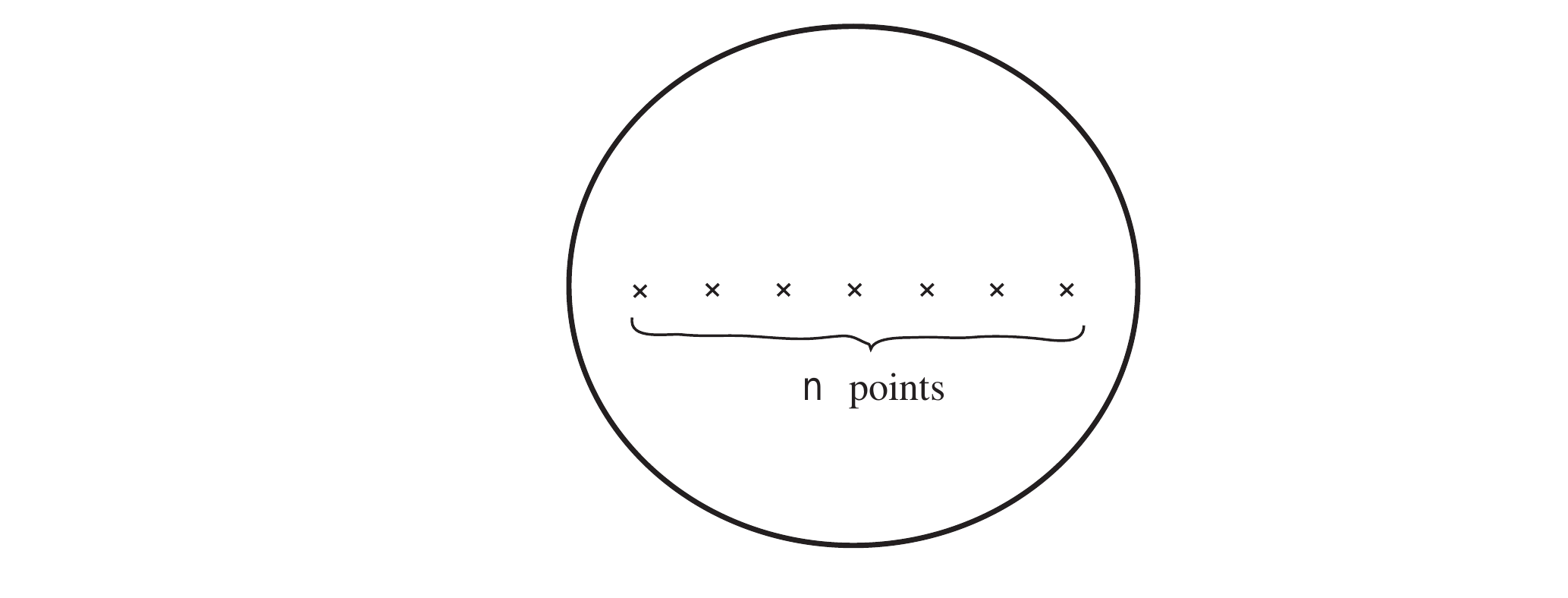}}
\end{center}
\caption{Disk with $n$ points removed}
\label{figDn}
\end{figure}

\section{Completions}

\subsection{General facts}

If $G$ is an arbitrary group and $\mathcal{Q}$ is a class
of groups satisfying certain well-known conditions, the completion $G_{\mathcal{Q}}$
of $G$ with respect to $\mathcal{Q}$ is the limit of the direct
system formed by the quotients of $G$ which belong to $\mathcal{Q}$.
We consider here the following classes : the finite groups,
the nilpotent groups, the finite nilpotent groups and the finite $\ell$-groups
for some fixed prime number $\ell$. We denote the corresponding
completions $\widehat{G}$, $G_{nil}$, $G_{f-nil}$, $G^{\ell}$.

Completions with respect to subclasses of the class of finite groups
are so-called profinite : with respect to the topology inherited from
the discrete finite quotients, they are compact and totally discontinuous.
It follows that the natural (continuous) morphism $\widehat{G} \to G_{f-nil}$
is always surjective. Another general feature is that
finite nilpotent groups are direct products of their $\ell$-Sylows.
A consequence here is that $G_{f-nil}$ is canonically isomorphic
to the direct product of the groups $G^{\ell}$, for $\ell$
a prime number. Finally, nilpotent groups are residually finite.
Recall that a group is said to be residually $\mathcal{Q}$ if every
non-trivial element admits a non-trivial image in
some group in $\mathcal{Q}$. A consequence here is that the natural morphism
$G_{nil} \to G_{f-nil}$ is always injective.

In case $G$ is a free group $F$, it is known
that $F$ is residually nilpotent, and in particular residually finite.
It follows that the morphisms $F \to F_{nil}$ and $F \to \widehat{F}$
are injective. The whole picture is illustrated by the following
commutative diagram.

\begin{center}
\mbox{\xymatrix{
 & \widehat{F} \ar@{->>}[dr] & \\
F \ar@{^{(}->}[ur] \ar@{^{(}->}[dr] & & F_{f-nil} \ar[r]^{\simeq} & {\displaystyle \prod_{\ell} F^{\ell}}  \\
 & F_{nil} \ar@{^{(}->}[ur] & }}
\end{center}

Also note that the universal property of free groups
extends to their completions. Assume for simplicity
that $F$ is free on two generators $x,y$. Choosing
two elements $\bar{x},\bar{y}$ in an arbitrary
group $G$ amounts to defining a morphism $\varphi : F \to G$,
such that $\varphi(f) = f(\bar{x},\bar{y})$.
If $G$ is finite (respectively nilpotent, finite and nilpotent,
or a finite $\ell$-group), this morphism extends to
the profinite (respectively pro-nilpotent, pro-finite-nilpotent,
pro-$\ell$) completion of $F$. In the general case it extends to
a morphism between the corresponding completions of $F$ and $G$. If $f$ lies in one these completions,
we will denote $f(x,y)$ the image of $x,y$ under this
morphism.

The braid groups are residually finite, so $B_n$ embeds into
$\widehat{B_n}$. This can be deduced from the same statement
for the free groups, as automorphism groups of residually finite
groups are residually finite by a classical result of Baumslag.

However, $B_n$ does not embed in $(B_n)_{nil}$ for $n \geq 3$. This comes from the fact
that its abelianization $B_n / C^1 B_n$ is isomorphic to $\Z$.
Indeed, if $G / C^1 G \simeq \Z$ it is known (see \cite{BOURB} \S 4 ex. 7)
that $C^{r+1} G = C^r G$
for all $r \geq 1$, whence $G_{nil} = G/C^1 G = \Z$
(and $G^{\ell} \simeq \Z_{\ell}$).

\subsection{Maltsev Hull}

Let $N$ be a torsion-free nilpotent group, assumed to
be finitely generated for simplicity. Recall that a group is called
divisible if, for all $r \in \Z \setminus \{ 0 \}$, the map
$x \mapsto x^r$ is a bijection. The Maltsev hull of $N$ is defined by the
following classical result.

\begin{theo} (Maltsev) There exists a unique embedding $j : N \into
N \otimes \Q$ where $N \otimes \Q$ is a divisible nilpotent
group such that $\forall x \in N \otimes \Q \ \ \exists r \in
\Z \setminus \{ 0 \} \ \ x^r \in j(N)$.
\end{theo}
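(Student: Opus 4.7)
The plan is to prove existence by induction on the nilpotency class $c$ of $N$, then obtain uniqueness from a universal property argument. For existence, the base case $c=1$ is immediate: $N \simeq \Z^d$ for some $d$, and one takes $N \otimes \Q = \Q^d$ with the canonical inclusion $j$; divisibility is trivial and the power-saturation property amounts to clearing denominators. For the inductive step, one considers the central extension
\[
1 \to Z(N) \to N \to N/Z(N) \to 1,
\]
where $Z(N)$ is a finitely generated torsion-free abelian group and $N/Z(N)$ is finitely generated torsion-free nilpotent of class $c-1$ (torsion-freeness of the quotient by the center is a classical lemma for nilpotent groups). By induction both $Z(N) \otimes \Q$ and $(N/Z(N)) \otimes \Q$ exist, and $N \otimes \Q$ would be constructed as a central extension of the latter by the former by extending the classifying $2$-cocycle to rational arguments, using the power-saturation in the already-built Malcev hulls.

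The cleaner route I would ultimately follow is the Malcev / Baker--Campbell--Hausdorff correspondence. One equips $N$ with a canonical $\Z$-Lie ring structure via the inverse BCH formula (well-defined since BCH terminates on a nilpotent group), tensors with $\Q$ to produce a nilpotent $\Q$-Lie algebra $\mathfrak{n}_\Q$, and defines $N \otimes \Q$ to be the group whose underlying set is $\mathfrak{n}_\Q$ with product given by BCH. Divisibility follows at once from the invertibility of scalar multiplication on the $\Q$-vector space $\mathfrak{n}_\Q$, and the power-saturation condition reduces to the existence, for each $v \in \mathfrak{n}_\Q$, of an integer $r$ with $rv$ in the $\Z$-lattice corresponding to $j(N)$. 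This bypasses the group-theoretic extension-of-cocycles argument entirely.

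For uniqueness, given $j_1 : N \into G_1$ and $j_2 : N \into G_2$ both satisfying the hypotheses, the key technical tool is the fact that in a torsion-free divisible nilpotent group, $r$-th roots exist and are unique (itself provable by induction on nilpotency class, or transparently from the Lie-algebra picture). Given $x \in G_1$, choose $r$ with $x^r \in j_1(N)$ and let $\varphi(x)$ be the unique element of $G_2$ whose $r$-th power equals $j_2(j_1^{-1}(x^r))$; independence of the choice of $r$ follows from uniqueness of roots, and bijectivity is immediate by symmetry. The delicate point is multiplicativity of $\varphi$, which I would verify by restricting to the finitely generated torsion-free nilpotent subgroup generated by any pair of elements and reusing uniqueness of roots on the target side. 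The main obstacle is precisely this root-extraction step — establishing both existence and uniqueness of rational powers inside a nilpotent group without abelianness — which is the reason the BCH framework is the natural language, as it converts the nonlinear group-theoretic problem into linear algebra over $\Q$.
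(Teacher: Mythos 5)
The paper does not actually prove this statement: it is quoted as a classical theorem of Maltsev and delegated to \cite{BOURB}, so there is no proof of record to compare yours against line by line. Your first route --- induction on the nilpotency class via the central extension $1 \to Z(N) \to N \to N/Z(N) \to 1$ and extension of the classifying cocycle to rational arguments --- is essentially the classical argument, and the torsion-freeness of $N/Z(N)$ you invoke is indeed the standard lemma. Your uniqueness argument via existence and uniqueness of $r$-th roots is also the standard one; note that with the paper's definition of divisible ($x \mapsto x^r$ a \emph{bijection}) uniqueness of roots is already part of the hypothesis, and the genuinely delicate point, as you yourself flag, is multiplicativity of $\varphi$, which you assert rather than prove.

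The one step that would fail as written is in your ``cleaner route'': a finitely generated torsion-free nilpotent group $N$ does \emph{not} carry a canonical $\Z$-Lie ring structure defined by the inverse Baker--Campbell--Hausdorff formulas. Those formulas terminate, but their coefficients are rational with nontrivial denominators; already in class $2$ one has $\log x + \log y = \log\bigl(xy\,(y x y^{-1} x^{-1})^{1/2}\bigr)$, so the would-be addition requires extracting square roots and does not preserve $N$. The BCH operations only define a Lie ring on a $\Q$-form, which presupposes the very completion you are trying to construct. The standard repair is the one the paper itself records in its Example 1: embed $N$ into $T_n(\Z) \subset T_n(\Q)$ (every finitely generated torsion-free nilpotent group admits such an embedding), apply the matrix logarithm there, take the $\Q$-span of $\log N$, and exponentiate; equivalently, stay with your cocycle induction, where the fact you need is P.~Hall's theorem that the group law in Mal'cev coordinates is polynomial (the paper's Example 2 is the class-$2$ instance of this). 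Either repair also supplies the linear-algebra picture you want for the root-extraction and multiplicativity steps of the uniqueness argument.
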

 
\medskip

It is also known that these divisible nilpotent groups are
unipotent algebraic groups over $\Q$ (see the \cite{BOURB} for the
full details and \cite{DELIGNE} \S 9 for a crash course on this
correspondence).
It follows that we can define $N \otimes \k$, for an arbitrary
communitative $\Q$-algebra $\k$ with unit, by the $\k$
points of the unipotent algebraic group $N \otimes \Q$.
Schematically, $N \otimes \k = (N \otimes \Q)(\k)$.

\medskip

\noindent {\bf  Example 1.} (see \cite{BOURB} \S 4 ex. 15 and ex. 16.) Assume that $N < T_n(\Z)$ where
$T_n(A)$ denote the set of unipotent upper-triangular matrices with coefficients
in $A$. Then $N < T_n(\Q)$ and $N \otimes \Q$ can be chosen to be the
subgroup of $T_n(\Q)$ defined by $\{ g \in T_n(\Q) \ | \ \exists r \in \Z\setminus \{ 0 \} \  \ g^r
\in N \}$. This example is universal in the sense that
any finitely generated torsion-free unipotent group can be
embedded in some $T_n(\Z)$. Moreover $N \otimes \k$ can be seen inside $T_n(\k)$
in the same way.

\medskip

\noindent {\bf  Example 2.} Let $H$ be the torsion-free nilpotent group defined
by the presentation $<A,B,C | (A,B)=C, (A,C)=(B,C)=1 >$. It is
known and easily checked that $H$ admits a normal form of the following
type : every $h \in H$ can be expressed uniquely as $A^{\alpha}
B^{\beta} C^{\gamma}$ for some $(\alpha,\beta, \gamma) \in \Z^3$.
In other words, the group $(H,.)$ can be identified to some
group structure $(\Z^3, \star)$ on $\Z^3$. This group law on $\Z^3$ is clearly
polynomial, more precisely given by
$$
(u,v,w) \star (x,y,z) = (u+x,v+y,w-xv + z)
$$
and thus it extends to a group structure on $\Q^3$. Again, the same
construction yields $H \otimes \k$ by replacing $\Q^3$ by $\k^3$.

\subsection{Maltsev completion of the free groups}

A free group $F$ is not torsion-free nilpotent, however it has the nice property that
each nilpotent quotient $F/C^r F$ is torsion-free. It follows that $F$ embeds into
$$
F(\k) = \limpro{r} F/C^r F \otimes \k
$$
which is called the Maltsev or the pro-$\k$-unipotent completion
of $F$. A non-canonical but explicit description of $F(\k)$
can be obtained as follows. Assume for convenience that $F$ is a free
group on two generators $x,y$ and consider two non-commutative variables
$A$ and $B$. We form
\begin{itemize}
\item the free associative $\k$-algebra with unit on $A,B$, denoted
$\k<A,B>$ (``non-commutative polynomials in $A,B$ '')
\item the completion $\k\ll A,B \gg$ of the former, with respect to
the grading $\deg A = \deg B = 1$ (``non-commutative formal series in $A,B$ '')
\item the free Lie $\k$-algebra on $A,B$, denoted
$\mathfrak{F}$ (``Lie polynomials in $A,B$ '')
\end{itemize}
The Lie algebra $\mathfrak{F}$ is also graded by $\deg A = \deg B = 1$
and it is a classical fact that the completion $\widehat{\U \mathfrak{F}}$
of its universal envelopping algebra is canonically isomorphic to
$\k \ll A , B \gg$. In particular, the group $\exp \widehat{\mathfrak{F}}$
can be seen as a group of invertible elements in $\k \ll A,B \gg$.
By sending $x$ to $\exp A$ and $y$ to $\exp B$ we define
a morphism $F \to \exp \widehat{\mathfrak{F}}$. This
morphism extends to the pro-$\k$-unipotent completion $F(\k)$
and induces an isomorphism $F(\k) \simeq \exp \widehat{\mathfrak{F}}$ :

\begin{center}
\mbox{
\xymatrix{ F \ar@{^{(}->}[dr] \ar[rr] & & \exp \widehat{\mathfrak{F}} & \raisebox{-4mm}{\hspace{-.9cm} $\subset  \k \ll A,B \gg$} \\
 &  F(\k) \ar[ur]_{\simeq} }}
\end{center}

This description makes it easier to compute with the
pro-$\ell$ completion of free groups, for the natural map $F \to F(\Q_{\ell})$
factorizes through $F^{\ell}$. Moreover, it is a classical theorem
of Lazard that the induced map $F^{\ell} \to F(\Q_{\ell})$ is injective.

\begin{center}
\mbox{
\xymatrix{ F \ar@{^{(}->}[dr] \ar[rr] & & F(\Q_{\ell})  \\
 &  F^{\ell} \ar@{^{(}->}[ur] }}
\end{center}

\section{The Grothendieck-Teichmüller groups}

\subsection{Reminder on braid groups} In order to define the Grothendieck-Teichmüller group
we need the classical presentation of $B_n$, that we recall.
In a small disk surrounding the points $i$ and $i+1$ in $D_n$ one
can define a diffeomorphism which fixes the boundary and
sends $i$ to $i+1$ by a half-turn rotation.
More precisely, a diffeomorphism of the unit disk in $\C$ exchanging
$\frac{1}{2}$ and $\frac{-1}{2}$ while fixing the
boundary is easily defined by $z \mapsto \exp^{2\ii
\pi |z|} z$, and we apply this to the chosen disk and extend it
on $D_n$ by the identity.
\begin{figure}
\begin{center}
\mbox{\resizebox{!}{5cm}{\mbox{\hspace{2cm}\includegraphics{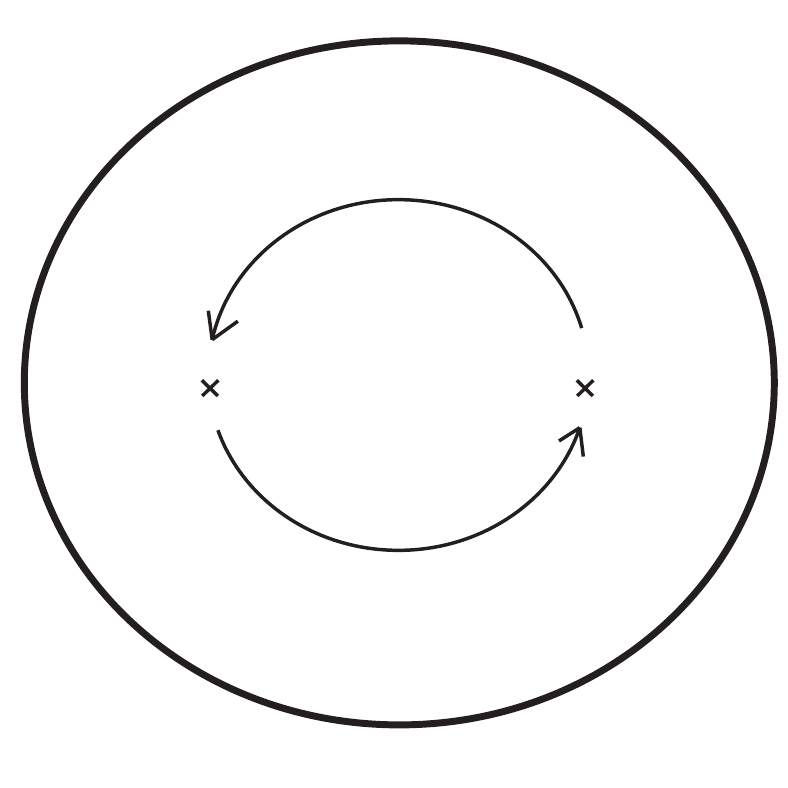}\hspace{-3cm} \includegraphics{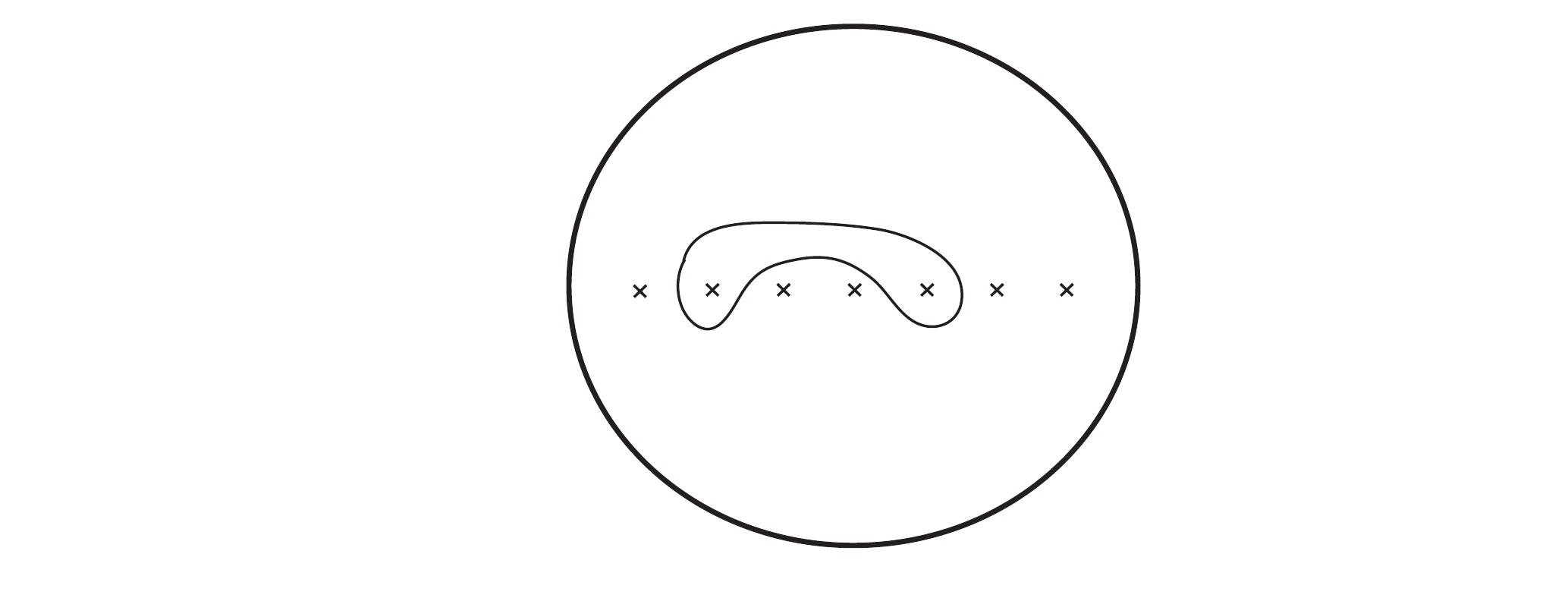}}}}
\end{center}\caption{General braid twist and $\xi_{25}$ as a Dehn twist.}
\label{xi25}
\end{figure}

We denote $\sigma_i$ the class of this diffeomorphism in $B_n$.
These elements for $1 \leq i \leq n-1$ generate $B_n$ with presentation $\sigma_i \sigma_j
= \sigma_j \sigma_i$ for $|j - i | \geq 2$ and $\sigma_i
\sigma_{i+1} \sigma_i = \sigma_{i+1} \sigma_i \sigma_{i+1}$.

The braid group $B_n$ acts on the set of punctures, thus
defining a morphism $B_n \to \mathfrak{S}_n$. The kernel of this
map is the pure braid group $P_n$. It can be seen as a mapping
class group and is generated by Dehn twists.
Recall that a Dehn twist along a simple closed curve inside $D_n$ is defined
by applying the diffeomorphism $(t,z) \to (t,e^{2\ii \pi t}z)$
of the annulus $[0,1] \times \{ z \in \C \ | \ |z| = 1 \}$
to a tubular neighbourhood of the curve and extending it trivially
outside this neighbourhood. Note that all Dehn twists of $D_n$ obviously
belong to $P_n$.

For $1 \leq i < j \leq n$ we define the usual generators
$\xi_{i,j} \in P_n$ to be the Dehn twists along a curve surrounding
$i$ and $j$ while passing above the points in-between (see figure \ref{xi25}).
They are all conjugated in $B_n$. 

\subsection{The Grothendieck-Teichmüller group}

The Grothendieck-Teichmüller group $\GTH$ is defined as a
set of couples $(\la,f) \in \ZH^{\times} \times C^1 \widehat{F}$
satisfying the following relations :
$$\begin{array}{lc}
\mathrm{(I)} & f(x,y)f(y,x)=1 \\
\mathrm{(II)} & y^{\mu}f(x,y)x^{\mu}f(z,x)z^{\mu}f(y,z)=1,
\mbox{ where } \mu = (\la-1)/2, z = (xy)^{-1} \\
\mathrm{(III)} & f(\xi_{12},\xi_{23}\xi_{24})
f(\xi_{13}\xi_{23},\xi_{34}) = f(\xi_{23},\xi_{34})f(\xi_{12}\xi_{13},\xi_{24}\xi_{34})f(\xi_{12},\xi_{23})
\mbox{ in } \widehat{P}_4 \\
\end{array}$$
Note that (II) implicitely states that $\la \in 1 + 2 \widehat{\Z}$.
Drinfeld defined in \cite{DRIN} an associative composition law
on the set of couples satisfying (I)-(II)-(III), namely
$$
(\la_1,f_1)(\la_2,f_2) = (\la_1\la_2, f_1\left(f_2 x^{\la_2}f_2^{-1},y^{\la_2}\right) f_2)
$$
for which $(1,1)$ is the neutral element. He then defined the group $\GTH$
as the subset of invertible elements for this law.

This (non-obvious)
group structure satisfies that the map $(\la,f) \to \la$
extends to an homomorphism $\GTH \to \ZH^{\times}$. One of
the main features of $\GTH$ is that it contains $\Gal(\Qb|\Q)$
and that the composite map $\Gal(\Qb | \Q) \to \ZH$ is the
cyclotomic character, whose kernel is the absolute Galois group of
the field $\Q(\mu_{\infty})$
of cyclotomic numbers. Denoting $\GTH_1$ the kernel
of the projection $\GTH \to \ZH^{\times}$ the following natural
diagram commutes.

\begin{center}
\mbox{\xymatrix{
1 \ar[r] & \GTH_1 \ar[r] & \GTH \ar[r] & \ZH^{\times} \ar[r] \ar@{=}[d] & 1 \\
1 \ar[r] & \Gal(\Qb|\Q(\mu_{\infty})) \ar@{^{(}->}[u]\ar[r] & \Gal(\Qb|\Q) \ar@{^{(}->}[u] \ar[r] & \ZH^{\times} \ar[r] & 1 
}}
\end{center}

The group $\GTH$ can be seen as an automorphism group
of the profinite completion $\widehat{B}_n$ of $B_n$,
which gives a natural description of its composition law.
For this, we need to define a collection of elements
$\delta_r$ in $B_n$, such that $\delta_2 = \sigma_1^2$,
and such that $\delta_2 \delta_3 \dots \delta_r$
is the Dehn twist associated to a curve
surrounding the first $r$ points (see figure \ref{deltadehn}).
Note that these curves can be chosen so that they do not
interesect each other. This implies that the elements
$\delta_2,\dots, \delta_r$ commute one to the other.
The collection of these curves
can be considered as a pants decomposition of $D_n$.

\begin{figure}
\begin{center}
\resizebox{!}{5cm}{\mbox{\hspace{-3.5cm} \includegraphics{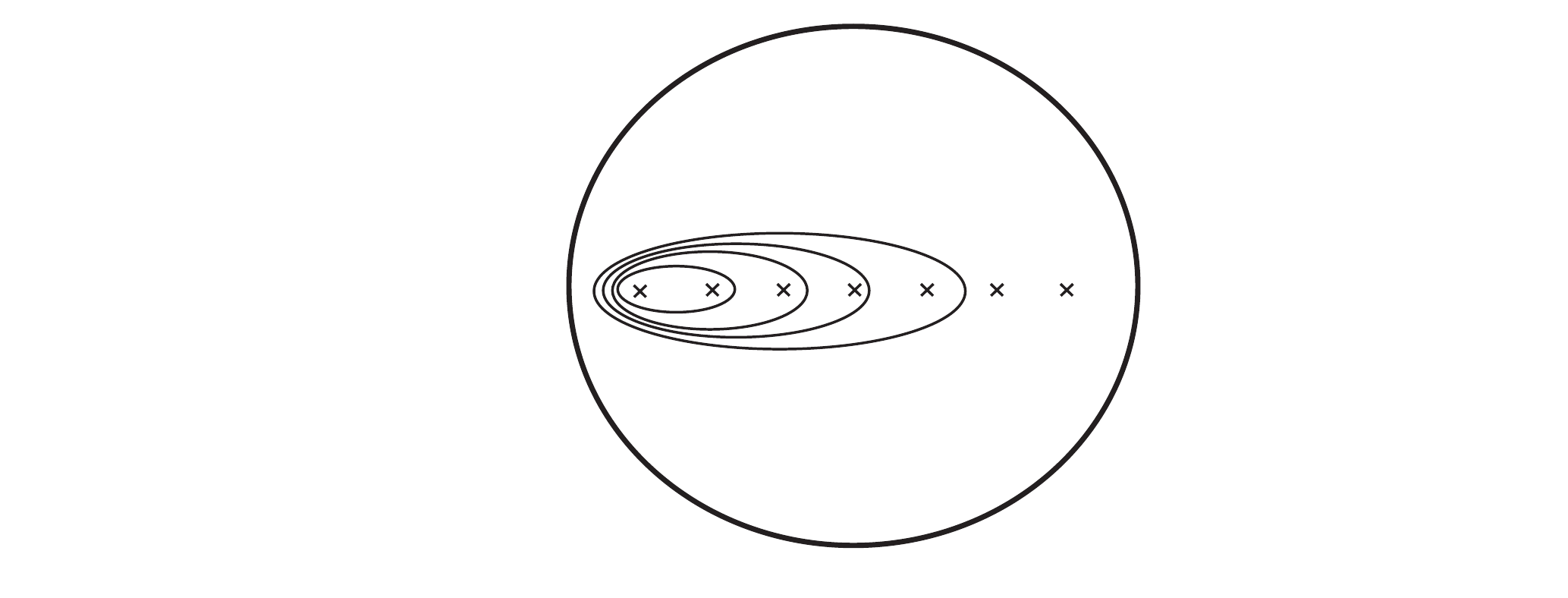}}}
\end{center}
\caption{The Dehn twists $\delta_2,\delta_2 \delta_3,\delta_2 \delta_3 \delta_4,\delta_2 \delta_3 \delta_4 \delta_5$}
\label{deltadehn}\end{figure}

The action of $(\la,f) \in \GTH$ on $\widehat{B}_n$
is defined by $\sigma_1 \mapsto \sigma_1^{\la}$ and
$$
(\la,f).\sigma_r = f(\sigma_r^2,\delta_r) \sigma_r^{\la}f(\sigma_r^2,\delta_r)^{-1}
$$
for $r \geq 2$.
This identifies $\GTH$ to a subgroup of $\widehat{B}_n$.

\subsection{Its pro-$\ell$ and pro-$\k$-unipotent versions.}

The relations defining $\GTH$ as well as the composition law make sense if $(\la,f)$
are taken in the other previously described completions
of $\Z$ and $F_2$, respectively. This in particular defines variations $\GT^{\ell}$,
and $\GT(\k)$, by considering the pro-$\ell$ and pro-$\k$-unipotent
completions of $F_2$, respectively. If we denote
$\underline{\GT}(\k)$ the semigroup of elements  $(\la,f) \in \kt \times C^1 F_2(\k)$
satisfying the analogues of (I)-(II)-(III), then $\GT(\k)$
is simply the subset of $\underline{\GT}(\k)$ defined by $\la \neq 0$.
We define $\GT_1^{\ell}$,
and $\GT_1(\k)$ in the obvious way.

We have natural morphisms
$\GTH \onto \GT^{\ell} \into \GT(\Q_{\ell})$. They are compatible with
the extensions of the cyclotomic character, namely the following
diagram commutes.
\begin{center}
\mbox{\xymatrix{
1 \ar[d] & 1\ar[d] & 1\ar[d] \\
\GTH_1\ar[d] \ar@{->>}[r] & \GT_1^{\ell}\ar[d]\ar@{^{(}->}[r] & \GT_1(\Q_{\ell})\ar[d] \\
\GTH\ar[d] \ar@{->>}[r] & \GT^{\ell}\ar[d]\ar@{^{(}->}[r] & \GT(\Q_{\ell})\ar[d] \\
\ZH^{\times}\ar[d] \ar@{->>}[r] & \Z_{\ell}^{\times}\ar[d]\ar@{^{(}->}[r] & \Q_{\ell}^{\times}\ar[d] \\
1 & 1 & 1 }}
\end{center}

Defining an action of $\GT^{\ell}$ on the pro-$\ell$
completion of $B_n$ would have little meaning, as this
latter completion is isomorphic to $\Z_{\ell}$. The same
holds for $\GT(\k)$ and the classical pro-$\k$-unipotent completion
of $B_n$. The trick used by Drinfeld in \cite{DRIN} is to define
a twisted completion $B_n(\k)$, loosely speaking, as a
semi-direct product of $\mathfrak{S}_n$ and
$$
P_n(\k) = \limpro{r} P_n/C^r P_n \otimes \k.
$$
This group $P_n(\k)$ is well-defined because the 
groups $P_n/C^r P_n$ can be proved to be torsion-free, however
it does not admit an action of $\mathfrak{S}_n$, thus one
has to use a refinement of the semi-direct product construction
instead. The Drinfeld
completion $B_n(\k)$ of $B_n$ is defined as the quotient of $B_n \ltimes
P_n(\k)$ by the subgroup generated by the elements $\sigma .\sigma^{-1}$
for $\sigma \in P_n$, considered on the left as elements of $B_n$
and on the right as elements of $P_n(\k)$.

Contrary to what happens for a genuine pro-$\k$-unipotent completion,
it is not possible to define $x^{\la}$ for an arbitrary $x \in B_n(\k)$
and $\la \in \kt$, nor $f(x,y)$ for $f \in F_2(\k)$. This however
makes sense if $x \in P_n(\k)$.
If $x^2 \in P_n(\k)$, denoting $x^{<\la>}$ the element
$x (x^2)^{\frac{\la-1}{2}}$, Drinfeld defined an action by
automorphisms
of $\GT(\k)$ on $B_n(\k)$ which sends $\sigma_1$
to $\sigma_1^{<\la>}$ and $\sigma_r$ to $f(\sigma_r^2,\delta_r) \sigma_r^{<\la>}f(\sigma_r^2,\delta_r)^{-1}
$
for $r \geq 2$. This action sends $\delta_r$ to $\delta_r^{\la}$.

\section{An example of profinite rigidity : the Kummer character $\rho_2$}

Recall that $B_3$ with generators $\sigma_1,\sigma_2$
is presented by the unique relation $\sigma_1 \sigma_2 \sigma_1
= \sigma_2 \sigma_1 \sigma_2$. We consider the representations
$R : B_3 \to \SL_2(A)$, where $A$ is a commutative integral ring
with 1, such that
$R(\sigma_1) = a$, where
$$
a = \left( \begin{array}{cc} 1 & 1 \\ 0 & 1
\end{array} \right)
$$
We have the following lemma, that we admit for now, as it will be a special
case of a more
general one.

\begin{lemma} \label{lemme41} If $R(\sigma_1)=a$, then either $R(\sigma_2)
=R(\sigma_1)$ or there
exists $u \in A$ such that $R(\sigma_2) = b(u)$, where
$$
b(u) = \left( \begin{array}{cc} 1-u  & u^2 \\ -1 & 1+u
\end{array} \right)
$$
\end{lemma}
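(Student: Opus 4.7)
The plan is to write the braid relation $R(\sigma_1) R(\sigma_2) R(\sigma_1) = R(\sigma_2) R(\sigma_1) R(\sigma_2)$, i.e. $aMa = MaM$, as a $2 \times 2$ matrix equation in the unknown entries of $M = R(\sigma_2)$, combined with the determinant condition $\det M = 1$. Setting $M = \left(\begin{array}{cc} x & y \\ z & w\end{array}\right)$, I would compute $aMa$ and $MaM$ explicitly and equate them entry by entry, obtaining a system of four polynomial equations in $x,y,z,w$ subject to $xw - yz = 1$.

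The cleanest of these is the $(2,1)$-entry equation, which after simplification reads $z(1 - x - z - w) = 0$. Since $A$ is integral, this yields the dichotomy $z = 0$ or $x + z + w = 1$, and I would treat the two cases separately. If $z = 0$, the $(1,1)$-equation reduces to $x(x-1) = 0$; integrality together with $\det M = xw = 1$ forces $x = w = 1$, and the $(1,2)$-equation then gives $y = 1$, so $M = a = R(\sigma_1)$, which is the first alternative.

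If instead $x + z + w = 1$, then the $(1,1)$-equation reads $(x+z)(1-x) = yz$. Substituting $yz = xw - 1$ from the determinant relation and using $x + z + w = 1$ to simplify, the equation collapses to $1 + z = 0$, so $z = -1$. The relation $x + z + w = 1$ then gives $x + w = 2$, which allows the parametrization $x = 1 - u$, $w = 1 + u$ with $u = 1 - x \in A$; the determinant relation $xw - yz = 1$ then forces $y = 1 - (1-u)(1+u) = u^2$, and we obtain $M = b(u)$. The remaining entries of the braid relation are then automatic, as one verifies by direct substitution.

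The main obstacle, such as it is, is to keep the symbolic computation clean enough to extract the dichotomy $z \in \{0,-1\}$ without drowning in algebra: the key trick is to read off the $(2,1)$-entry first, so that integrality of $A$ can be exploited immediately, and then to use the determinant to eliminate the $yz$-term from the $(1,1)$-equation so that the second case collapses to $z = -1$ in a single step.
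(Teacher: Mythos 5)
Your proof is correct and follows essentially the same route as the paper's (which proves the more general $a_{\la}$ version of the lemma): both expand the braid relation $aMa=MaM$ entrywise, impose $\det M=1$, and use integrality of $A$ on the $(2,1)$-entry to obtain the dichotomy between $R(\sigma_2)=a$ and the one-parameter family $b(u)$. The only cosmetic difference is that the paper first normalizes $M$ to have trace $2$ (since $\sigma_1$ and $\sigma_2$ are conjugate in $B_3$), writing $M=\left(\begin{smallmatrix}1-u & v\\ w & 1+u\end{smallmatrix}\right)$ with $vw=-u^2$ from the start, whereas you recover the trace condition $x+w=2$ directly from the equations.
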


On the other hand,
the centralizer of $a$ in $\PGL_2(A)$ is easily computed. A
complete system of representatives in $\GL_2(A)$ of this centralizer is
given by the matrices $c(v) = \left( \begin{array}{cc}
1 & v \\ 0 & 1 \end{array} \right)$ for $v \in A$.
It is easily checked that, if $b = b(0)$, then $c(v)bc(v)^{-1} = b(v)$.
Hence we have
\begin{lemma} \label{lemme42} If $R_1,R_2 : B_3 \to \SL_2(A)$ satisfy
$R_1(\sigma_1) = R_2(\sigma_1) = a$ and $R_i(\sigma_1) \neq
R_i(\sigma_2)$ for $i \in \{ 1,2 \}$, then they are conjugated
by an element of $\PGL_2(A)$.
\end{lemma}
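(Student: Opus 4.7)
The plan is to deduce Lemma \ref{lemme42} almost immediately from Lemma \ref{lemme41} together with the centralizer computation made in the paragraph preceding the statement, so the work is mainly bookkeeping on the conjugating element.

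First I would apply Lemma \ref{lemme41} to both $R_1$ and $R_2$. Since by hypothesis $R_i(\sigma_1) = a$ and $R_i(\sigma_2) \neq R_i(\sigma_1)$, the alternative given by Lemma \ref{lemme41} forces the existence of elements $u_1, u_2 \in A$ with $R_i(\sigma_2) = b(u_i)$. At this stage both representations are completely determined by the single parameters $u_1$ and $u_2$, so conjugating $R_1$ into $R_2$ amounts to finding one matrix $P \in \GL_2(A)$ which fixes $a$ under conjugation and sends $b(u_1)$ to $b(u_2)$.

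Next I would take $P = c(u_2 - u_1)$, with $c(v)$ as in the discussion preceding the lemma. The condition $P a P^{-1} = a$ is automatic because $c(v)$ lies in the centralizer of $a$ in $\GL_2(A)$. For the condition on $\sigma_2$, I would first observe that the identity $c(v) b(0) c(v)^{-1} = b(v)$ given in the text, combined with the obvious relation $c(v_1) c(v_2) = c(v_1 + v_2)$, yields more generally
\[
c(v) b(u) c(v)^{-1} = c(v) c(u) b(0) c(u)^{-1} c(v)^{-1} = c(v+u) b(0) c(v+u)^{-1} = b(v+u).
\]
Applying this with $v = u_2 - u_1$ and $u = u_1$ gives $P b(u_1) P^{-1} = b(u_2)$, so the image of $P$ in $\PGL_2(A)$ realizes the required conjugacy between $R_1$ and $R_2$.

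The only point requiring a little care is the intermediate identity $b(u) = c(u) b(0) c(u)^{-1}$, which is just the general case of the formula stated in the text for arbitrary $u$; everything else is a one-line verification. There is no real obstacle, since Lemma \ref{lemme41} is doing the structural work and the centralizer of $a$ in $\PGL_2(A)$ is explicit and abelian. The proof is essentially complete as soon as one notices that the parameter $u$ in the family $b(u)$ is a $c(v)$-conjugation orbit coordinate.
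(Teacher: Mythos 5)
Your proposal is correct and follows exactly the route the paper intends: the paper states the lemma as an immediate consequence ("Hence we have") of Lemma \ref{lemme41} together with the identity $c(v)b(0)c(v)^{-1}=b(v)$ and the description of the centralizer of $a$, and your explicit choice $P=c(u_2-u_1)$ with the additivity $c(v)b(u)c(v)^{-1}=b(u+v)$ is precisely the bookkeeping left to the reader. No gaps.
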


Choose $A = \Z_{\ell}$ and define $R : B_3 \to \SL_2(\Z_{\ell})$ by
$R(\sigma_1) = a$ and $R(\sigma_2 ) = b$.
Since $\SL_2(\Z_{\ell})$ has a natural structure of a profinite group,
$R$ extends to a representation of $\widehat{B}_3$. Let
$g \in \GTH_1$. Then $R' = R \circ g$ defines a representation
of $\widehat{B_3}$ such that $R'(\sigma_1) = a$. We have
$R'(\sigma_2) \neq R'(\sigma_1)$ ; indeed, since $R = R'\circ g^{-1}$,
$R(\sigma_2)$ would otherwise be conjugated to $R(\sigma_1) = R'(\sigma_1)$
by an element of the commutator subgroup of $<R'(\sigma_1),R'(\sigma_2)>$
which is trivial, contradicting $R(\sigma_1) \neq R(\sigma_2)$.

It follows from lemma \ref{lemme42} that
there exists a well-defined $\rho(g) \in \Z_{\ell}$
such that $R'(\sigma_2 ) = c_g^{-1} b  c_g$ with $c_g = c(\rho(g))$.
This $\rho : \GTH_1 \to \Z_{\ell}$ is a continuous character
of $\GTH_1$, arising from the rigidity of the representation
$R$.

\bigskip

Since, in the profinite group $\SL_2(\Z_{\ell})$, we have,
for all $x \in \ZH$,
$$
a^x = \left(\begin{array}{cc} 1 & 1 \\ 0 & 1 \end{array} \right)^{x}
=
\left(\begin{array}{cc} 1 & x \\ 0 & 1 \end{array} \right)
$$
which is conjugated to $a$, we can extend $\rho$ to
a 1-cocycle of $\GTH$ in the same spirit. More precisely,
let $g = (\la,f) \in \GTH$. The above calculation shows that
$R \circ g(\sigma_1) = R(\sigma_1)^{\la}$.
On the other hand, lemma \ref{lemme41} can be strengthened as follows. Let
$$
a_{\la} = \left(\begin{array}{cc} 1 & \la \\ 0 & 1 \end{array} \right)
$$

\begin{lemma} Let $\la \in A$. If $R(\sigma_1)=a_{\la}$ then, either $R(\sigma_2)
=R(\sigma_1)$, or $\la \in A^{\times}$ and there
exists $u \in A$ such that $R(\sigma_2) = b_{\la}(u)$, where
$$
b_{\la}(u) = \left( \begin{array}{cc} 1-u  & \la u^2 \\ -1/\la & 1+u
\end{array} \right)
$$
\end{lemma}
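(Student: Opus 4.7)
The plan is to follow the same matrix-computation strategy that proves Lemma~\ref{lemme41}, now with $R(\sigma_1) = a_\la$ in place of $a$. Write $R(\sigma_2) = \begin{pmatrix} \alpha & \beta \\ \gamma & \delta \end{pmatrix}$ with $\alpha\delta - \beta\gamma = 1$ and expand both sides of the braid relation
$$
a_\la R(\sigma_2) a_\la \;=\; R(\sigma_2) a_\la R(\sigma_2).
$$
The four entrywise equations are the entire content of this relation; each entry is a polynomial identity in $\la, \alpha, \beta, \gamma, \delta$.

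First I would exploit the $(2,1)$-entry, which simplifies to $\gamma = \gamma(\alpha + \delta + \la \gamma)$, i.e.\ $\gamma(\alpha + \delta + \la\gamma - 1) = 0$. Since $A$ is integral, this yields the dichotomy that drives the two cases of the lemma. In the first case $\gamma = 0$, so $\alpha\delta = 1$ forces $\alpha \in A^\times$; the $(1,1)$-entry collapses to $\alpha = \alpha^2$, whence $\alpha = \delta = 1$, and the $(1,2)$-entry then forces $\beta = \la$. This recovers $R(\sigma_2) = a_\la = R(\sigma_1)$.

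In the second case $\alpha + \delta + \la\gamma = 1$. Here the strategy is to combine this with $\det R(\sigma_2) = 1$ and the $(1,1)$-entry relation $\beta\gamma = \alpha + \la\gamma - \alpha^2 - \la\alpha\gamma$: substituting $\delta = 1 - \alpha - \la\gamma$ into $\alpha\delta - \beta\gamma = 1$ and eliminating $\beta\gamma$ yields the clean identity $\la\gamma = -1$. In particular $\la \in A^\times$ and $\gamma = -1/\la$. Parametrizing $\alpha = 1 - u$ gives $\delta = 1 + u$ at once, and back-substituting into the expression for $\beta\gamma$ produces $\beta = \la u^2$. Checking the remaining $(2,2)$- and $(1,2)$-entries is then automatic and provides no new constraint on $u$.

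There is no real obstacle: the computation is routine once one observes that the $(2,1)$-entry provides the right initial dichotomy, and that in the nontrivial branch the combination of the determinant relation with the $(1,1)$-entry is what forces $\la$ to be invertible. The only subtlety worth flagging is that integrality of $A$ is used twice — once to split the dichotomy and once to conclude $\alpha = 1$ from $\alpha(\alpha - 1) = 0$ with $\alpha \in A^\times$.
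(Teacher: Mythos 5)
Your proof is correct and follows essentially the same route as the paper: expand the braid relation $a_\la R(\sigma_2) a_\la = R(\sigma_2) a_\la R(\sigma_2)$ entrywise, use integrality of $A$ on the $(2,1)$-entry to obtain the dichotomy, and solve the resulting system (the paper merely pre-normalizes $R(\sigma_2)$ to trace~$2$ via the conjugacy of $\sigma_1$ and $\sigma_2$ in $B_3$, whereas you derive the trace condition from the braid relation and the determinant, which is a harmless variant). One tiny quibble: in the case $\gamma=0$ you do not actually need integrality to get $\alpha=1$ from $\alpha=\alpha^2$, since $\alpha$ is already a unit.
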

\begin{proof} Since $a$ and $b$ have to be conjugate, we are looking for an
invertible matrix of trace 2, of the form
$$
b = \left( \begin{array}{cc} 1-u  & v \\ w & 1+u
\end{array} \right)
$$
such that $aba=bab$. Since the determinant of $b$ has to be $1$, we get
$vw = -u^2$.The equation $aba = bab$
yields to the system
$$
\left\lbrace \begin{array}{lcl}
u(1+\la w) & = & 0 \\
\la+\la^2 w-v+\la u^2 & = & 0 \\
-w(1+\la w) & = & 0\\
-u(1+\la w) & = & 0 \\
\end{array} \right.
$$
Since $A$ is integral, the third equation means $w = 0$ or
$\la w = -1$. But $w=0$ implies $u = 0$ and $v = 1$ hence $a = b$.
The other possibility is that $\la \in A^{\times}$ and $w=-1/m$.
It follows that the system of equations boils down to $v = \la u^2$ which is the form required.
\end{proof}

We have $c(v) b_{\la}(0) c(v)^{-1} = b_{\la}(v\la^{-1})$. Introduce the
right action of $A^{\times}$ on $\SL_2(A)$ induced by the morphism
$A^{\times} = \GL_1(A) \to \GL_2(A)$ which sends $\la$
to 
$$d_{\la} = \left( \begin{array}{cc} 1 & 0 \\ 0 & \la \end{array} \right)
$$
and by the action by conjugation of $\GL_2(A)$ on $\SL_2(A)$,
that is $x.\la = d_{\la}^{-1} x d_{\la}$. Then
$b_{\la}(u) = d_{\la}^{-1} b(u) d_{\la} = b(u).\la$.
It follows that $R \circ g(\sigma)
= c_g^{-1} (R(\sigma).\la) c_g$ for all $\sigma \in \widehat{B}_3$
and some well-defined $c_g = c_{\rho(g)}$ with $\rho(g) \in A$, and it is
easily checked that $c_{g_1g_2} = (c_{g_1}.\la_2) c_{g_2}$ hence
$\rho(g_1 g_2) = \la_2 \rho(g_1)+\rho(g_2)$.
This shows that $\rho$ can be extended to a continuous 1-cocycle
of $\GTH$ with values in $\Z_{\ell}$, endowed with the action by
multiplication induced by $\GTH \to \Z_{\ell}^{\times}$.

\bigskip

For $\sigma \in \Gal(\Qb|\Q)$, $\rho$ coincides with
the $\ell$-part of $-8 \rho_2$, where $\rho_2 : \Gal(\Qb|\Q) \to \ZH$ is the
Kummer character of (positive) roots of 2, defined by
$$
\sigma(\sqrt[n]{2}) = \zeta_n^{\rho_2(\sigma)} \sqrt[n]{2}
$$
where $\zeta_n = \exp(2 \ii \pi/n)$. This can be inferred from
\cite{NAKA}, corollary 4.13, where $f(a^2,b^2)$
is calculated in $\SL_2(\ZH)$ for $(\la,f)$ the
image in $\GTH$ of $\sigma \in \Gal(\Qb|\Q)$.
Using this calculation we find in this situation
$$
R\circ g(\sigma_2) = f(a^2,b^2)^{-1} b^{\la} 
f(a^2,b^2) = b_{\la}(8 \rho_2 \la^{-1}) = c(-8\rho_2)^{-1}
(b.\la) c(-8\rho_2)
$$
where $\rho_2 = \rho_2(\sigma)$.

It has been shown in \cite{NAKASCHN}
that $\rho_2$ can be extended to $\widehat{GT}$.
This construction thus gives an alternative simple way to do so.

\medskip

Note that the morphism $B_3 \to \GL_2(\Z_{\ell})$ used here can be seen
as a degeneration for $q =-1$ of the classical Burau representation
$$
\sigma_1 \mapsto \left( \begin{array}{cc} -q & 1 \\
0 & 1 \end{array} \right)
\ \ \ 
\sigma_2 \mapsto \left( \begin{array}{cc} 1 & 0 \\
q & -q \end{array} \right)
$$

\section{GT-rigidity}

Let $\k$ be a field of characteristic 0, $\mathcal{A} = \k[[h]]$
the ring of formal power series and
$K = \k((h))$ the field of formal Laurent series. We let $\GL_N^0(K) = \{ x \in \GL_N(K) \ | \ x \equiv 1
\mod h \} \subset \GL_N(\mathcal{A}) $.

We consider representations $R : B_n \to \GL_N(K)$
such that $R(P_n) \subset \GL_N^0(K)$. This implies that $R(B_n)
\subset \GL_N(\mathcal{A})$, as is readily checked. Such a representation
can be naturally extended to $B_n(\k)$.

A natural idea would be to call an irreducible representation $R : B_n \to \GL(V)$
GT-rigid if $R \circ g \simeq R$ for all $g \in \GT(\k)$. However,
this notion has little interest, as shows the following proposition.

\begin{prop} If $R \circ g \simeq R$ for all $g \in \GT(\k)$, then
$R(P_n) = \{ 1 \}$ and $R \circ g = R$ for all $g \in \GT(\k)$.
\end{prop}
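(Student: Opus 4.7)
The argument is a deformation-theoretic induction, using that $(\la, f) \in \GT(\k)$ acts on the abelianization $\mathfrak{p}_n^{ab}(\k) = \mathfrak{t}_n^{(1)}$ by multiplication by $\la$.

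Since $\GL_N^0(K) = 1 + h M_N(\mathcal{A})$ is pro-unipotent, $R|_{P_n}$ extends to $R : P_n(\k) \to \GL_N^0(K)$, and then $R$ extends to $R : B_n(\k) \to \GL_N(\mathcal{A})$. The reduction $R_0 := R \mod h$ kills $P_n$ and factors through $\mathfrak{S}_n$; set $V_0 = \k^N$ with the induced $\mathfrak{S}_n$-action and $C = \End_{\mathfrak{S}_n}(V_0)$. Since $f \in C^1 \widehat{F_2}$ forces $f(\sigma_r^2, \delta_r) \in P_n$ and $\sigma_r^{<\la>} \equiv \sigma_r \mod P_n$, each $g \in \GT(\k)$ acts trivially on $B_n(\k)/P_n(\k) \cong \mathfrak{S}_n$.

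I argue by induction on $k \geq 1$ that $R(P_n) \subset 1 + h^k M_N(\mathcal{A})$. Assuming this, write $R(\xi) = 1 + h^k r_k(\xi) + O(h^{k+1})$ for $\xi \in P_n$; multiplicativity forces $r_k$ to be additive and factor through $P_n^{ab}$, and conjugation by $B_n$ makes it $\mathfrak{S}_n$-equivariant, yielding $r_k : \mathfrak{p}_n^{ab}(\k) \to \End(V_0)$. For $g = (\la, f) \in \GT(\k)$ the hypothesis produces $M_g \in \GL_N(K)$ satisfying $M_g R(\sigma) = R(g \cdot \sigma) M_g$; by Schur applied to the irreducible $R$, $M_g$ is unique up to $K^\times$, so one may rescale to $M_g \in M_N(\mathcal{A})$ with nonzero reduction $m_g \in M_N(\k)$, necessarily lying in $C$. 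Expanding the intertwining identity at order $h^k$ modulo $h^{k+1}$ and using $g \cdot [\xi_{ij}] = \la [\xi_{ij}]$ on $\mathfrak{p}_n^{ab}(\k)$ yields the key identity
\[
m_g \cdot r_k(\xi) \;=\; \la \cdot r_k(\xi) \cdot m_g
\]
in $\End(V_0)$, for every $\xi \in P_n$ and every $g = (\la, f) \in \GT(\k)$.

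Letting $\la$ range over $\kt$ (by surjectivity of $\GT(\k) \to \kt$), one deduces $r_k = 0$. If $R_0$ is absolutely irreducible, $C = \k$ and $m_g$ is scalar, so the identity reads $r_k(\xi) = \la\, r_k(\xi)$, forcing $r_k = 0$. In general, $g \mapsto [m_g]$ is a group homomorphism $\GT(\k) \to C^\times/\k^\times$; perfectness of the $\PGL$-factors of $C^\times/\k^\times$ on the multiplicity spaces $W_\alpha$ in the isotypic decomposition $V_0 = \bigoplus_\alpha U_\alpha \otimes W_\alpha$ forces this image to lie in $Z(C)/\k^\times$, so $m_g$ acts as a scalar $c_\alpha(g) \in \k^\times$ on each $W_\alpha$ and the key identity decomposes into block equations $c_\beta(g) \psi_{\alpha\beta}(\xi) = \la\, c_\alpha(g)\, \psi_{\alpha\beta}(\xi)$ on the components $\psi_{\alpha\beta}(\xi) \in \Hom(W_\alpha, W_\beta)$ of $r_k(\xi)$; since $\psi_{\alpha\beta}(\xi)$ is fixed while $\la$ varies, one concludes $\psi_{\alpha\beta}(\xi) = 0$. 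By induction $R(P_n) = 1$, so $R$ factors through $\mathfrak{S}_n$, on which $\GT(\k)$ acts trivially, giving $R \circ g = R$. The main obstacle is the block-vanishing step in the reducible case, which requires combining the group-theoretic structure of the homomorphism $g \mapsto [m_g]$ with the variation of $\la$.
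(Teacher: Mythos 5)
Your strategy is genuinely different from the paper's: you run a deformation-theoretic induction on the $h$-adic order of $R|_{P_n}-1$, whereas the paper first shows $R(\sigma_1^2)$ is unipotent by comparing spectra, then uses Drinfeld's \emph{polynomial} section $\la\mapsto g_\la$ of $\GT(\k)\to\kt$ to specialize the character of $R\circ g_\la|_{P_n}$ at $\la=0$, deduces that $R|_{P_n}$ has trivial semisimplification, and concludes by Clifford theory (the restriction of an irreducible to a normal subgroup is semisimple). The first half of your argument is correct: the extension to $P_n(\k)$, the triviality of the $\GT(\k)$-action on $\mathfrak{S}_n$, the action by $\la$ on $P_n^{ab}\otimes\k$, and the identity $m_g\, r_k(\xi)=\la\, r_k(\xi)\, m_g$ with $m_g\in C\setminus\{0\}$ all check out, and when $C=\k$ this does kill $r_k$.

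The reducible case, however, has a genuine gap, and it is exactly where the difficulty of the proposition sits. (i) Normalizing $M_g$ into $M_N(\mathcal{A})\setminus hM_N(\mathcal{A})$ only gives $m_g\neq 0$, not $m_g\in C^{\times}$ (think of $M_g=\diag(1,h)$), and this normalization is not multiplicative, so $g\mapsto[m_g]$ need not be a homomorphism even when the $m_g$ are invertible ($\diag(1,h)\cdot\diag(h,1)=hI$ reduces to $I$, while $m_{g_1}m_{g_2}=0$). (ii) Perfectness of the target of a homomorphism does not constrain its image to be central, so the reduction to scalars $c_\alpha(g)$ on the multiplicity spaces is unjustified. (iii) Most importantly, even granting (i) and (ii), the block equation $c_\beta(g)\,\psi_{\alpha\beta}(\xi)=\la\, c_\alpha(g)\,\psi_{\alpha\beta}(\xi)$ does \emph{not} force $\psi_{\alpha\beta}=0$ for $\alpha\neq\beta$: the scalars $c_\alpha(g)$ vary with $g$ along with $\la$, and the relation $c_\beta(g)=\pi(g)\,c_\alpha(g)$ is perfectly consistent. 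Concretely, if $V_0=U_1\oplus U_2$ with $U_1\not\simeq U_2$ irreducible and $r_k(\xi)\in\Hom(U_2,U_1)\subset\End(V_0)$, then $m=\diag(\la\,\id_{U_1},\id_{U_2})\in C$ satisfies your key identity with $r_k(\xi)\neq 0$; nothing in your argument excludes the intertwiners from reducing to such elements. So the identity $m_g r_k(\xi)=\la r_k(\xi)m_g$ only kills the ``diagonal'' part of $r_k$, and an input of a different nature is needed to kill the rest --- in the paper this role is played by the polynomiality of $\la\mapsto R\circ g_\la$ (allowing specialization at $\la=0$) combined with the character and Clifford arguments, which bypass the structure of $C$ entirely.
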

\begin{proof}
We know that the morphism $\GT(\k) \to \k^{\times}$ is surjective. Let
$\la \mapsto g_{\la}$ be a set-theoretical section of this
morphism. By assumption we have
$$
R(\sigma_1^2)^{\la} 
= R((\sigma_1^2)^{\la}) = R(g_{\la} . \sigma_1^2 ) \sim R(\sigma_1^2)
$$
for all $\la \in \k^{\times}$.
By comparing spectrums in some algebraic closure of $K$ it follows that,
since $\k$ is infinite, $R(\sigma_1^2)$ is unipotent.
Thanks to a result of Drinfeld (see \cite{DRIN} propositions 5.2 and 5.3), we can choose $\la \mapsto
g_{\la}$ to be an algebraic section of $\GT(\k) \to \k^{\times}$.
Writing $R(\sigma_1^2)^{\la} 
= (1+N)^{\la}$ with $N$ nilpotent we show that $R(\sigma_1^2)^{\la}$ tends
to 1 when $\la \to 0$ namely that the 1-parameter subgroup
$\la \mapsto R \circ g_{\la}(\sigma_1^2)$ can be algebraically extended
to a morphism from $\k$ which sends $0$ to $1 \in \GL_N(K)$.

More generally, the map $\la \mapsto R \circ g_{\la} \in \Hom(B_n(\k),
\GL_N(K))$ can be chosen to be a polynomial mapping that can be specialized in
$\la = 0$. Indeed, the map $\la \mapsto g_{\la}$ as constructed
in \cite{DRIN} extends to a polynomial mapping $\k \to \underline{GT}(\k)$
(see \cite{DRIN} \S 5 remark 2 after proposition 5.9) and $\underline{GT}(\k)$
acts by endomorphisms on $B_n(\k)$. It follows that the
image of any generator $\xi_{i,j}$ of $P_n$ under $R \circ g_{\la}$
can be written as $1 + N_{i,j}(\la)$ where $N_{i,j}(\la)$ is a nilpotent
matrix with values in $K[\la]$.

Let us consider the character $\chi_{\la}$ of the restriction
of $R \circ g_{\la}$ to $P_n$. For all $\sigma \in P_n$
we know that $R\circ g_{\la}$ has coefficients in $K[\la]$
hence $\chi_{\la}(g) \in K[\la]$. By assumption it is independent
of $\la \in \kt$ and equals $\dim V$ for $\la = 0$, whence we get
that it is the character of the trivial representation. Since $K$
has characteristic 0, characters determine isomorphism
classes of the semisimplication of representations. It follows that
the restriction of $R$ to $P_n$ is a unipotent representation.
On the other hand, it is the restriction to a normal subgroup
of a (semi)simple representation of $B_n$, hence it is semisimple.
It follows that $R(P_n) = \{ 1 \}$. By the formulas defining
the action of $\GT(\k)$ it also follows that $R \circ g = R$
for all $g \in \GT(\k)$.

\end{proof}

This proof shows that an obstacle comes from the
quotient map $\pi : GT(\k) \to \k^{\times}$. It is the only one,
namely there exist representations $R$ such that $R \circ g
\simeq R$ for all $g$ in the kernel $\GT_1(\k)$
of this map.

\begin{defi} $R$ is called $\GT_1$-rigid if $\ \forall g \in \GT_1(\k)
\ \ \ R \circ g \simeq R$.
\end{defi}

If $R$ is such a representation we thus get a
projective representation $Q_R : \GT_1(\k) \to \PGL(V)$. Consider
now the right action of $\k^{\times}$ on $K$ defined by
$(f .\alpha)(h) = f(\alpha h)$ for $f(h) \in K$ and $\alpha \in 
\k^{\times}$. It induces an action of $\GT(\k)$ on $K$ hence
on $\PGL(V)$ by the
morphism $\pi : \GT(\k) \to \kt$.

\begin{defi} $R$ is called $\GT$-rigid if $\ \forall g \in \GT(\k)
\ \ \ R \circ g \simeq ( \sigma \mapsto R(\sigma) .\pi(g))$.
\end{defi}

In particular, if $R$ is $\GT$-rigid then it is $\GT_1$-rigid. It turns
out that such representations exist. They provide
(non-commutative) cocycles in $Z^1(\GT(\k), \PGL_N(K))$.

\section{Burau representation}

\subsection{The Burau characters}

Let $q = e^h \in K$. We define the classical (reduced) Burau
representation by the following formulas. Let $e_1,\dots,
e_{n-1}$ be the canonical basis of $K^{n-1}$.
We let $\sigma_1 . e_1 = q e_1$ and $\sigma_1.e_m
= -q^{-1} e_m$ for all $m \neq 1$. For $2 \leq d \leq r-1$
we let $\sigma_d .e_m = -q^{-1} e_m$ if
$m \not\in \{ d, d+1 \}$, and we make $\sigma_d$
act on the plane with basis $(e_{d-1},e_d)$
by the matrix
$$
\frac{1}{[d]_q} \left( \begin{array}{cc} -q^{-d} & * \\
* & q^d \end{array} \right)
$$
with
$$
* = \sqrt{ [d+1]_q [d-1]_q } \mbox{\ \  and \ \   } [\alpha]_q = \frac{q^{\alpha}
- q^{-\alpha}}{q-q^{-1}}.
$$
This defines an irreducible $(n-1)$-dimensional
representation of $B_n$, called the Burau representation.
It can be shown (see \cite{GT}) that this representation is $\GT$-rigid, hence we get a
projective representation $Q_R : \GT_1(\k) \to \PGL_{n-1}(K)$.

The elements $\delta_r$ act in the following way :
$$
\left\lbrace \begin{array}{lcl}
\delta_r e_{r-1} & = & q^2 e_{r-1} \\
\delta_r e_s & = & q^{-2(r-2)} e_s \mbox{ if } s < r-1 \\
\delta_r e_s & = & q^{-2(r-1)} e_s \mbox{ if } s > r-1.
\end{array} \right.
$$
It is easily checked that the algebraic hull (Zariski closure)
of the subgroup
generated by the corresponding matrices is the set of diagonal matrices,
that is a maximal torus of $\GL_{n-1}(K)$. Since the $\delta_r$'s
are fixed by $\GT_1(\k)$, it follows that
the image of $Q_R$ centralizes this torus, hence is part of it.
We thus can write $Q_R(g)$ as the image in $\PGL_{n-1}(K)$
of a well-defined matrix of the form
$$
\left( \begin{array}{cccc}
1 & 0 & 0 \\
0 & \chi_2(g) & 0\\
0 & 0 & \chi_2 \chi_3(g) \\ 
 & & & \ddots 
\end{array} \right)
$$
where $\chi_d : \GT_1(\k) \to K^{\times}$ is a character
that can be extended to a 1-cocycle $\GT(\k) \to K^{\times}$.
This decomposition of $Q_R$ is compatible with the usual
embedding $B_n \subset B_{n+1}$,
so we do not need to specify $n$ and, as $n$ varies, this defines
an infinite family $\chi_d, d \geq 2$ of characters of
$\GT_1(\k)$.

\subsection{Transcendental aspects}

Recall from \cite{DRIN} that $\Phi_{KZ}$ is defined as $G_1^{-1} G_0$,
where $G_1, G_0$ are the solutions over $]0,1[$ of the differential
equation
$$
G'(x) = \frac{1}{2 \ii \pi} \left( \frac{A}{x} + \frac{B}{x-1} \right)
G(x)
$$
with values in $\C \ll A,B \gg$, defined by the asymptotic conditions $G_0(x) \sim x^{\frac{A}{2 \ii \pi}}$
when $x \to 0$ and $G_1(x) \sim (1-x)^{\frac{B}{2 \ii \pi}}$
when $x \to 1$. Moreover, $\Phi_{KZ}$ as well as $\overline{\Phi_{KZ}}(A,B)
= \Phi_{KZ}(-A,-B)$ belong to the $\C$-points of an affine scheme
$\Ass_1$ over $\Q$ which is a torsor under $\GT_1$. In particular
there exists a well-defined $g_0 \in \GT_1(\C)$ such that
$\overline{\Phi_{KZ}} = g_0. \Phi_{KZ}$. Using the work of
J. Gonz\'alez-Lorca in \cite{JORGE}, it was shown in \cite{GT} that
$$
\chi_d(g_0) = \exp\left(2 \sum_{n=1}^{+\infty} \frac{\zeta(2n+1)}{2n+1}
\hslash Q_n(d) \right)
$$
where $Q_n(d) = (d+1)^{2n+1} + (d-1)^{2n+1} - 2 d^{2n+1}$ and $\hslash = h/\ii \pi$.

\subsection{Arithmetic aspects}

On the
arithmetic side, if we fix $\ell$ and denote $\overline{\sigma}$
the image of $\sigma \in \Gal(\overline{\Q}| \Q(\mu_{\ell^{\infty}}))$
in $\GT_1(\Q_{\ell})$, we have
$$
\chi_d(\overline{\sigma}) = 1 - 8 \kappa_3^*(\sigma) d h^3 - \frac{8}{3} \kappa_5^*
(\sigma) d(1+2d^2) h^5 + \dots
$$
where $\kappa_m^*(\sigma) = \kappa_m(\sigma)/({\ell}^{m-1} -1)$
and $\kappa_m$ are $\Z_{\ell}$-valued characters called the Soulé
characters. They can be elementary defined as follows (see \cite{ICHI}).
Let $\zeta_n$ denote a primitive $\ell^n$-th root of 1, and
introduce
$$
\eps_{m,n} = \prod_{\stackrel{0 < a < \ell^n}{a \wedge {\ell} = 1}}
(\zeta_n^a -1)^{[a^{m-1}]}
$$
where $[a^{m-1}]$ denotes the (non-negative) euclidean remainder of the division of
$a^{m-1}$ by $\ell^n$. It can be proved that these
elements are totally real and totally positive, and $\kappa_m$
is defined by
$$
\sigma(\sqrt[\ell^n]{\eps_{m,n}}) =
\zeta_n^{\kappa_m(\sigma)} \sqrt[\ell^n]{\sigma(\eps_{m,n})} \zeta_n^{\kappa_m(\sigma)}.
$$
Taking the logarithm of $\chi_d$, this proves that the Soulé characters,
like the Kummer character $\rho_2$, can also be extended to $\GTH$, at least for $m \in \{ 3, 5 \}$.

\end{document}